\documentclass[a4, 12pt]{amsart}
\usepackage[mathscr]{eucal}
\usepackage{amssymb}
\usepackage{latexsym}
\usepackage{amsthm}
\usepackage{color}
\theoremstyle{plain}
\newtheorem{theorem}{Theorem}[section]

\newtheorem{remark}{Remark}[section]
\newtheorem{lemma}{Lemma}[section]
\newtheorem{proposition}{Proposition}[section]

\makeatletter
\@addtoreset{equation}{section}

\title[The rigidity theorems for complete self-expander]
{The rigidity theorems for complete self-expander of mean curvature flow}
\author [Z. Li and G. Wei]{Zhi Li and Guoxin Wei}
\address{Zhi Li \\  School of Mathematics and Statistics, Henan Normal University,
\newline \indent 453007, Xinxiang, Henan, China. \newline \indent lizhihnsd@126.com}

\address{Guoxin Wei \\  School of Mathematical Sciences, South China Normal University,
\newline \indent 510631, Guangzhou,  China. \newline \indent  weiguoxin@tsinghua.org.cn}

\begin{document}
\maketitle

\begin{abstract}
Our first main result is a rigidity theorem for complete self-expanders in the Euclidean space with higher codimension, assuming an integral curvature pinching condition. More precisely, any smooth complete self-expander $x:M\to \mathbb R^{n+p}$($n\geq3$) that satisfies both
$(\int_{M}|A|^{n}d\mu)^{\frac{1}{n}}<K(n)$ and $\int_{M}|A|^{n}e^{\frac{|x|^{2}}{2}}d\mu<\infty$ for a positive constant $K(n)$ depending only on the dimension $n$ must be isometric to $\mathbb R^{n}$.
Moreover, we show that the rigidity result persists when the pinching condition is expressed in terms of the trace-free second fundamental form.
\end{abstract}

\footnotetext{2020 \textit{Mathematics Subject Classification}:
53E10; 53C40.}

\footnotetext{{\it Key words and phrases}: self-expander, integral curvature pinching, rigidity theorem.}

\section{introduction}
\vskip2mm
\noindent

An $n$-dimensional  submanifold  $x: M\rightarrow \mathbb{R}^{n+p}$  in the $(n+p)$-dimensional
Euclidean space $\mathbb{R}^{n+p}$  is called a self-expander if it satisfies
\begin{equation*}
\vec H=x^{\perp},
\end{equation*}
where $\vec H$ and $x^{\perp}$ denote mean curvature vector of  this submanifold and the normal part of the position vector $x$, respectively.
Self-expanders are self similar solutions of the mean curvature flow,
that is, the family of hypersurfaces $x_{t}=\sqrt{2t}x, \ \ t>0$ satisfying the equation of mean curvature flow.
Self-expanders are important as they model the behavior of a mean curvature flow
coming out of a conical singularity (\cite{AIC}), and also model the long time behaviors of the flows starting from entire graphs (\cite{Guo}).

Self-expanders arise naturally when one considers solutions of graphical mean curvature
flow. In the case of codimension one and under certain assumptions on the initial hypersurface
at infinity, Ecker and Huisken (\cite{EH}) studied the mean curvature flow evolutions of entire graphical immersions. Under some assumptions
on the initial hypersurface at infinity, they showed that the solution of the mean curvature flow exists for all times $t>0$
and converges to a self-expander. Later, Stavrou (\cite{Sta}) proved the same result under weaker hypothesis
that the initial hypersurface has a unique tangent cone at infinity. Self-expanders also appears in the
mean curvature evolution of cones. In \cite{Ilm}, Ilmanen studied the existence of E-minimizing self-expanding hypersurfaces which converge to prescribed closed cones at infinity.  Ding (\cite{Din}) studied self-expanders and their relationship to minimal cones in Euclidean space. The space
of asymptotically conical self-expanders was studied in several papers by Bernstein and
Wang (\cite{BW01} and \cite{BW02}). There are other works in
self-expanders (see, e.g. (\cite{BW03}, \cite{FM}, \cite{LN}, \cite{WX}) and references therein).

As with self-shrinkers in the  mean curvature flow, self-expanders also admit a wealth of classification results and rigidity theorems.
Ishimura \cite{Ish} and Halldorsson \cite{H} have classified self-expander curves in $\mathbb R^{2}$. In \cite{H}, Halldorsson states that each of the complete self-expander curves immersed in $\mathbb R^{2}$ is convex, properly embedded and asymptotic to the boundary of a cone with vertex at the origin. In 2018, Cheng and Zhou \cite{CZ} studied some properties of complete properly immersed self-expanders. Later, Ancari and Cheng \cite{AC} studied immersed self-expander hypersurfaces whose mean curvatures have some linear growth controls and obtained some rigidity property of hyperplanes as self-expanders in the Euclidean space. In 2024, the author of the present paper (\cite{LW1}) studied the classification of complete the complete Lagrangian self-expanders in $\mathbb{C}^{2}$.
Recently, under the condition that the squared norm of the second fundamental form being bounded from above, the author of the present paper (\cite{LW2}) studied the rigidity problem for $2$-dimensional complete Lagrangian self-expanders with constant squared norm $|\vec{H}|^{2}$ of the mean curvature vector in $\mathbb{C}^{2}$.

In higher dimension and higher codimension case, the classification and rigidity of self-expander is much more complicated. In this paper, we first study a rigidity theorem for complete self-expanders under an integral curvature pinching condition.

\begin{theorem}\label{theorem 1.1}
Let $x: M^{n}\to \mathbb{R}^{n+p}$ ($n\geq3$) be a complete immersed self-expander in $\mathbb{R}^{n+p}$. If $M^{n}$ satisfies integral
conditions $$\Big(\int_{M^{n}}|A|^{n}\Big)^{\frac{1}{n}}<K(n) \ \ and \ \ \int_{M}|A|^{n}e^{\frac{|x|^{2}}{2}}d\mu<\infty,$$
where $K(n)$ is an explicit positive constant depending only on $n$, then $M^{n}$ is isometric to $\mathbb R^{n}$.
\end{theorem}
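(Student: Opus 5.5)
The plan is a weighted integral estimate for the self-expander drift Laplacian, closed by a Sobolev inequality. For self-expanders the natural operator is
\[
\mathcal{L}f=\Delta f+\langle x,\nabla f\rangle,
\]
which is self-adjoint with respect to the weight $e^{\frac{|x|^{2}}{2}}d\mu$. This is why the hypothesis $\int_M|A|^ne^{\frac{|x|^2}{2}}d\mu<\infty$ appears.

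\textbf{Step 1 (Simons-type identity).} Differentiating $\vec H=x^{\perp}$ gives $\nabla^{\perp}\vec H=-\langle A(\cdot,\cdot),\cdot\rangle$-type identities and the Codazzi relations. From these I expect the standard Simons-type identity in higher codimension:
\[
\tfrac12\mathcal{L}|A|^2=|\nabla A|^2+|A|^2+\Phi, \qquad |\Phi|\le \tfrac32|A|^4 ,
\]
where the bound on $\Phi$ is the Li--Li estimate for the normal-curvature terms. The sign of the linear term $+|A|^2$ should be the favorable one for expanders (it is $-|A|^2$ for shrinkers). Combining with Kato's inequality $|\nabla A|^2\ge(1+\frac{2}{n})|\nabla|A||^2$ (or $\ge|\nabla|A||^2$) and setting $u=|A|$, we get
\[
u\,\mathcal{L}u\ \ge\ \tfrac{2}{n}|\nabla u|^2+u^2-\tfrac32u^4 .
\]

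\textbf{Step 2 (Weighted integration).} Multiply by $u^{n-2}\eta^2 e^{\frac{|x|^2}{2}}$, where $\eta$ is a cutoff, and integrate by parts using the self-adjointness of $\mathcal L$. This yields, for $w=u^{n/2}$, an inequality of the form
\[
c_1\int|\nabla (w\eta)|^2e^{\frac{|x|^2}{2}}+\int w^2\eta^2e^{\frac{|x|^2}{2}}\le C\int u^2 w^2\eta^2e^{\frac{|x|^2}{2}}+C\int w^2|\nabla\eta|^2e^{\frac{|x|^2}{2}} .
\]
The last term tends to $0$ as the cutoff radius $R\to\infty$, precisely by the hypothesis $\int u^ne^{\frac{|x|^2}{2}}<\infty$.

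\textbf{Step 3 (Sobolev absorption).} Apply the Michael--Simon/Hoffman--Spruck Sobolev inequality to $f=w\eta e^{\frac{|x|^2}{4}}$. It gives
\[
\Big(\int f^{\frac{2n}{n-2}}\Big)^{\frac{n-2}{n}}\le C_n\Big(\int|\nabla f|^2+|H|^2f^2\Big).
\]
The gradient of the weight produces $|x^{\top}|^2$ terms. This is where $n\geq3$ is used.

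Then bound the curvature term by H\"older:
\[
\int u^2f^2\le \Big(\int u^n\Big)^{\frac2n}\Big(\int f^{\frac{2n}{n-2}}\Big)^{\frac{n-2}{n}} .
\]
The term $|H|^2f^2$ is handled the same way, since $|H|^2\le n|A|^2$. The smallness $\|A\|_{L^n}<K(n)$ then lets us absorb these terms into the left side. Letting $R\to\infty$ leaves $\int w^2e^{\frac{|x|^2}{2}}\le0$, so $A\equiv0$.

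\textbf{Step 4 (Conclusion).} With $A\equiv0$ the immersion is totally geodesic, so $M$ is a complete affine $n$-plane and hence isometric to $\mathbb R^n$. In fact $x^{\perp}=0$ forces the plane through the origin.

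\textbf{Main obstacle.} The delicate step is Step 3: converting the weighted gradient into an unweighted Sobolev-admissible quantity. Expanding $|\nabla(w\eta e^{|x|^2/4})|^2$ produces $\frac14|x^{\top}|^2f^2$ and cross terms. To control these, use $\mathcal{L}|x|^2=2n+2|x|^2$ (since $\Delta|x|^2=2n+2\langle x,\vec H\rangle=2n+2|x^\perp|^2$). That is, integrate by parts the cross term $\int w\eta\langle\nabla(w\eta),x\rangle e^{|x|^2/2}$ to trade $|x^\top|^2$ for the favorable terms $n+|x|^2$. I would try exactly this identity first. The explicit $K(n)$ comes from tracking the constants here together with $C_n$ and the $\frac32$ from Step 1.
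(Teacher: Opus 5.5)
\textbf{There is a genuine gap.} Your overall scheme matches the paper's: the test function $u^{n-2}\eta^2\rho$ with $\rho=e^{|x|^2/2}$, Sobolev applied to $f=u^{n/2}\eta\rho^{1/2}$, the cross term integrated by parts via $\Delta\rho=\rho(n+|x|^2)$, H\"older with $\|A\|_{L^n}$ small, and an expanding cutoff. However, Step 1 has the wrong sign, and this breaks Step 2.

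For $\vec H=x^{\perp}$ and $\mathcal L=\Delta+\langle x,\nabla\cdot\rangle$, the zeroth-order term is unfavorable. Take $p=1$. Then $\nabla_iH=-h_{ik}\langle x,e_k\rangle$ and $H_{,ij}=-h_{ijk}\langle x,e_k\rangle-h_{ij}-Hh_{ik}h_{kj}$. Simons' identity then gives $\mathcal L h_{ij}=-(1+|A|^2)h_{ij}$, hence $\frac12\mathcal L|A|^2=|\nabla A|^2-|A|^2-|A|^4$. It is shrinkers that get $+\frac12|A|^2$. Multiplying by $u^{n-2}\eta^2\rho$ and integrating therefore gives
\[
(n-1-\varepsilon)\int|\nabla u|^2u^{n-2}\eta^2\rho\le\int u^n\eta^2\rho+\tau\int u^{n+2}\eta^2\rho+\frac1\varepsilon\int|\nabla\eta|^2u^n\rho .
\]
So $\int w^2\eta^2\rho$ sits on the right-hand side, not the left. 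This term is not small: there is no factor $u^2$ for H\"older and $\|A\|_{L^n}<K(n)$ to act on. Nor is it a cutoff error. With unspecified constants $c_1,C$ your argument cannot close, and the conclusion $\int w^2\rho\le 0$ is not available.

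The missing idea concerns the term $-\frac n2\int f^2$ that $\Delta\rho$ produces in your ``main obstacle'' computation,
\[
\int|\nabla f|^2=\int|\nabla(w\eta)|^2\rho-\frac14\int|x^{T}|^2f^2-\frac12\int|\vec H|^2f^2-\frac n2\int f^2 .
\]
This term is not a bonus. It is exactly what must beat the bad linear term, and that requires sharp constants.

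Only the plain Kato inequality $|\nabla A|^2\ge|\nabla|A||^2$ is available. Your refined $(1+\frac2n)$ version needs $\nabla A$ trace-free, i.e.\ $\nabla^{\perp}\vec H=0$, which fails for general expanders. Using
\[
|\nabla(u^{n/2}\eta)|^2\le(1+\delta)\tfrac{n^2}{4}u^{n-2}|\nabla u|^2\eta^2+(1+\tfrac1\delta)u^n|\nabla\eta|^2,
\]
the bad term enters the Sobolev estimate with coefficient $\frac{n^2(1+\delta)}{4(n-1-\varepsilon)}$. This must not exceed $\frac n2$, which means $n(1+\delta)\le 2(n-1-\varepsilon)$. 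That is achievable with $\delta,\varepsilon>0$ precisely because $n\ge3$. The paper takes $\delta=\frac{2(n-1-2\varepsilon)}{n}-1$, leaving net coefficient $-\frac{n\varepsilon}{2(n-1-\varepsilon)}<0$. This comparison, not only the Sobolev exponent, is where $n\ge3$ is really used.

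Once you insert this comparison, the rest of your plan goes through and gives $|A|\equiv0$:
\begin{itemize}
\item H\"older on $\tau\int u^2f^2$;
\item the $|\vec H|^2f^2$ term, either via $|\vec H|^2\le n|A|^2$ or by cancelling it against $-\frac12\int|\vec H|^2f^2$ as the paper does;
\item letting the cutoff expand.
\end{itemize}
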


\begin{remark}
In fact, this theorem is mainly influenced by the earlier work of Professors Ding and Xin (\cite{DX}) on self-shrinkers.
\end{remark}
Consider a submanifold with trace-free second fundamental form $\mathring{A}=A-\frac{1}{n}g\otimes \vec{H}$.
The another main goal of this paper is to prove a rigidity theorem for self-expanders, assuming integral curvature pinching conditions involving $\mathring{A}$.

\begin{theorem}\label{theorem 1.2}
Let $x: M^{n}\to \mathbb{R}^{n+p}$ ($n\geq3$) be a complete immersed self-expander in $\mathbb{R}^{n+p}$. If
the trace-free second fundamental form $\mathring{A}$ of $M^{n}$ satisfies $$\Big(\int_{M^{n}}|\mathring{A}|^{n}\Big)^{\frac{1}{n}}<\Omega^{-\frac{1}{2}}(a, n) \ \ and \ \ \int_{M}|\mathring{A}|^{2a}e^{\frac{|x|^{2}}{2}}d\mu<\infty,$$
where
$$1\leq a<\frac{n+\sqrt{n^{2}-2n}}{2}, \ \ \Omega(a, n)=\frac{4a^{2}\varsigma D^{2}(n)[\frac{(2a-1)(n-2)^{2}}{2n(2an-n-2a^{2})}+(n-1)^{2}]}{(2a-1)(n-2)^{2}},$$
$\varsigma=1$ if $p=1$, $\varsigma=2$ if $p\geq2$
and $D(n)$ is the Sobolev constant, then $M^{n}$ is isometric to $\mathbb R^{n}$.
\end{theorem}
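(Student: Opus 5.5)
We prove Theorem \ref{theorem 1.2} by a weighted Simons-type inequality for $|\mathring{A}|$, a Sobolev inequality, and an integral argument with cut-off functions. The goal is to show $\mathring{A}\equiv0$, then $A\equiv 0$.

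\emph{Simons identity with the expander drift.} Let $\mathcal{L}=\Delta+\langle x,\nabla\,\cdot\,\rangle$ be the drift Laplacian adapted to self-expanders; it is symmetric with respect to $e^{\frac{|x|^{2}}{2}}d\mu$. From $\vec H=x^{\perp}$ we get $\nabla^{\perp}_i\vec H=-\sum_k\langle x,e_k\rangle h_{ik}$ and a Simons formula
\[
\tfrac12\mathcal{L}|A|^{2}=|\nabla A|^{2}-|A|^{2}-(\text{quartic normal/curvature terms}).
\]
The sign of the linear term favors rigidity for expanders. The same formula holds for $\mathring{A}$ after subtracting the trace part. The quartic terms are bounded as in Li--Li (codimension $\geq2$, factor $\tfrac32$) or Okumura (codimension one). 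In terms of $\mathring{A}$ and $\vec H$ this gives
\[
\tfrac12\mathcal{L}|\mathring{A}|^{2}\ge |\nabla\mathring{A}|^{2}+|\mathring A|^{2}-\varsigma\, c_1(n)|\mathring{A}|^{4}-c_2(n)|\vec H||\mathring A|^{3}-\tfrac1n|\vec H|^2|\mathring A|^2 .
\]
This is where $\varsigma$ enters. We combine it with the refined Kato inequality for the trace-free tensor of a self-expander, $|\nabla\mathring{A}|^{2}\ge(1+\tfrac{2}{n})|\nabla|\mathring{A}||^{2}$ up to an $\vec H$-gradient error. The $\vec H$-terms are absorbed into the Codazzi decomposition of $\nabla A$. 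This is the source of the constants $(n-1)^{2}$ and $\frac{(n-2)^{2}}{2n}$ in $\Omega(a,n)$.

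\emph{Integral estimate.} Set $f=|\mathring{A}|$ and take a cut-off $\phi$ supported in $B_{2R}$, equal to $1$ on $B_R$, with $|\nabla\phi|\le 1/R$. Multiply the inequality by $f^{2a-2}\phi^{2}e^{\frac{|x|^{2}}{2}}$ and integrate by parts. Because the drift is built into $\mathcal{L}$, this produces no $\langle x,\nabla\rangle$ boundary terms, only $|\nabla\phi|$ terms. The result bounds $\frac{2a-1}{a^{2}}\int|\nabla(f^{a})|^2\phi^2 e^{|x|^2/2}$ plus $\int f^{2a}\phi^2e^{|x|^2/2}$ by $\int f^{2a+2}\phi^{2}e^{|x|^2/2}$ plus cut-off errors. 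The range $1\le a<\frac{n+\sqrt{n^{2}-2n}}{2}$ is exactly the condition $2an-n-2a^{2}>0$, which keeps the Kato-improved gradient coefficient positive after Young's inequality. The weighted integrability $\int f^{2a}e^{|x|^2/2}<\infty$ makes the cut-off errors $O(R^{-2})\to0$.

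\emph{Sobolev step, the main obstacle.} The Michael--Simon/Hoffman--Spruck Sobolev inequality with constant $D(n)$ gives
\[
\Big(\int (u)^{\frac{2n}{n-2}}\Big)^{\frac{n-2}{n}}\le D^{2}(n)\Big(\int|\nabla u|^{2}+\tfrac{(n-2)^2}{\ldots}\int|\vec H|^{2}u^{2}\Big).
\]
We apply it to $u=f^{a}\phi e^{|x|^{2}/4}$. Differentiating the weight produces $|x^{T}|^{2}u^{2}$ terms, and the $|\vec H|^2=|x^\perp|^2$ term combines with them into $|x|^{2}u^{2}$. These must be dominated via the identity $\Delta|x|^{2}=2n+2|x^{\perp}|^{2}$ integrated against $u^{2}$. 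This is the delicate bookkeeping and may produce the factor $(n-1)^2$. By Hölder,
\[
\int f^{2}u^{2}\le\Big(\int f^{n}\Big)^{2/n}\Big(\int u^{\frac{2n}{n-2}}\Big)^{\frac{n-2}{n}} .
\]
The pinching $\|\mathring{A}\|_{n}^{2}<\Omega^{-1}$ then makes the quartic term absorbable, leaving $\int f^{2a}\phi^{2}e^{|x|^{2}/2}\le o(1)$ as $R\to\infty$. Hence $\mathring{A}\equiv0$.

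\emph{Conclusion.} $M$ is totally umbilic in $\mathbb{R}^{n+p}$, so it is a piece of an affine $n$-plane or a round $n$-sphere. Completeness excludes compact spheres from being expanders: on a sphere $\vec H$ points inward while $x^{\perp}$ points outward unless the sphere is centered elsewhere, which the equation excludes. A complete flat plane $P$ has $\vec H=0$, so $x^{\perp}=0$ and $P$ passes through the origin. Therefore $M$ is isometric to $\mathbb{R}^{n}$.
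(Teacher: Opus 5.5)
Your skeleton matches the paper's: the weighted Simons inequality, the test function $|\mathring A|^{2a-2}\eta^2\rho$, Sobolev applied to $|\mathring A|^a\eta\rho^{1/2}$, H\"older against $\|\mathring A\|_n$, and a cut-off. However, the step that actually closes the argument is wrong.

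\textbf{The sign error.} For a self-expander the zeroth-order term in the Simons inequality for $\mathring A$ is $-|\mathring A|^2$, exactly as in your own formula for $|A|^2$. The correct inequality is
\[
\tfrac12\mathcal L|\mathring A|^2\ge|\nabla|\mathring A||^2-|\mathring A|^2-\varsigma|\mathring A|^4-\tfrac1n|\vec H|^2|\mathring A|^2 .
\]
There is also no cubic $|\vec H||\mathring A|^3$ term, because the Hessian of $\vec H$ has been eliminated using $\vec H=x^\perp$. So the linear term does \emph{not} favor rigidity. Your integral estimate puts $\int f^{2a}\phi^2e^{|x|^2/2}$ on the good side and silently drops the $|\vec H|^2$ term, so it is false. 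What you actually get is
\[
\begin{aligned}
(2a-1-\epsilon)\int|\nabla|\mathring A||^2|\mathring A|^{2a-2}\eta^2\rho\le{}&\int|\mathring A|^{2a}\eta^2\rho+\varsigma\int|\mathring A|^{2a+2}\eta^2\rho\\
&+\tfrac1n\int|\vec H|^2|\mathring A|^{2a}\eta^2\rho+\tfrac1\epsilon\int|\nabla\eta|^2|\mathring A|^{2a}\rho .
\end{aligned}
\]
This has two bad terms that the smallness of $\|\mathring A\|_n$ cannot absorb, since nothing in the hypotheses controls $|\vec H|$ or the weighted volume.

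\textbf{The missing idea.} The weight supplies exactly the terms needed to kill these. Take $u=|\mathring A|^a\eta$ and $f=u\rho^{1/2}$. Integrating the cross term by parts and using $\Delta\rho=\rho(n+|x|^2)$ gives
\[
\int|\nabla f|^2=\int|\nabla u|^2\rho-\tfrac14\int|x^T|^2u^2\rho-\tfrac12\int|\vec H|^2u^2\rho-\tfrac n2\int u^2\rho .
\]
These are favorable terms, not terms to be ``dominated''. Insert this into the Hoffman--Spruck inequality of Proposition~\ref{proposition 4.1}, which has gradient coefficient $\frac{4(n-1)^2(1+s)}{(n-2)^2}D^2(n)$ and $|\vec H|^2$ coefficient $(1+\frac1s)\frac{D^2(n)}{n^2}$. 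Also use $|\nabla u|^2\le a^2(1+\delta)|\mathring A|^{2a-2}|\nabla|\mathring A||^2\eta^2+(1+\frac1\delta)|\mathring A|^{2a}|\nabla\eta|^2$.

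One then chooses $\delta$ so that the total $|\vec H|^2$ coefficient vanishes:
\[
\frac{a^2(1+\delta)}{n(2a-1-\epsilon)}=\frac12-\frac{(n-2)^2}{4n^2(n-1)^2s}.
\]
This choice automatically makes the coefficient of $\int u^2\rho$ negative. A value $\delta>0$ is attainable only when $n(2a-1)>2a^2$, and that is where the range of $a$ comes from. $\Omega(a,n)$ is the resulting coefficient of $\int|\mathring A|^{2a+2}\eta^2\rho$ in the bound for $\|f\|^2_{2n/(n-2)}$, in the limit $\epsilon\to0$, $s\to s(0,a)$. The factor $(n-1)^2$ is simply the Sobolev constant.

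\textbf{The refined Kato inequality.} Attributing these constants to a refined Kato inequality is incorrect. That inequality is also unusable here: its ``$\vec H$-gradient error'' involves $\nabla^\perp\vec H=-A(x^T,\cdot)$, which is uncontrolled. The plain Kato inequality is all that is needed.

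\textbf{The final step.} Your conclusion (totally umbilic, hence a plane or a sphere, and spheres are not expanders) is a valid alternative to the paper's Codazzi argument. Spheres are more cleanly excluded by noting that $\Delta\frac{|x|^2}{2}=n+|x^\perp|^2>0$ on any self-expander, which rules out compact ones.
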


It is worth noting that when $a=\frac{n}{2}$, we obtain the following rigidity property, whose hypotheses are weaker than those of Theorem \ref{theorem 1.1}.

\begin{theorem}\label{theorem 1.3}
Let $x: M^{n}\to \mathbb{R}^{n+p}$ ($n\geq3$) be a complete immersed self-expander in $\mathbb{R}^{n+p}$. If
the trace-free second fundamental form $\mathring{A}$ of $M^{n}$ satisfies $$\Big(\int_{M^{n}}|\mathring{A}|^{n}\Big)^{\frac{1}{n}}<\Omega^{-\frac{1}{2}}(n) \ \ and \ \ \int_{M}|\mathring{A}|^{n}e^{\frac{|x|^{2}}{2}}d\mu<\infty,$$
where
$$\Omega(n)=\frac{\varsigma D^{2}(n)[n^{2}(n-1)+n-2]}{(n-2)^{2}},$$
$\varsigma=1$ if $p=1$, $\varsigma=2$ if $p\geq2$
and $D(n)$ is the Sobolev constant, then $M^{n}$ is isometric to $\mathbb R^{n}$.
\end{theorem}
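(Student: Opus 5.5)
Theorem \ref{theorem 1.3} is the special case $a=\frac{n}{2}$ of Theorem \ref{theorem 1.2}, so the proof is a substitution followed by checking that this choice of $a$ is allowed. First, the admissibility condition $1\leq a<\frac{n+\sqrt{n^{2}-2n}}{2}$ holds for $a=\frac n2$. Indeed, $n\geq 3$ gives $\frac n2\geq\frac32>1$. Also $\sqrt{n^{2}-2n}>0$ for $n\geq3$, so $\frac n2<\frac{n+\sqrt{n^2-2n}}{2}$. With $2a=n$, the two integrability hypotheses of Theorem \ref{theorem 1.2} become exactly those in Theorem \ref{theorem 1.3}:
\[
\Big(\int_M|\mathring A|^n\Big)^{1/n}<\Omega^{-1/2}(\tfrac n2,n),
\qquad
\int_M|\mathring A|^{n}e^{\frac{|x|^2}{2}}d\mu<\infty .
\]

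It remains to check that $\Omega(\frac n2,n)=\Omega(n)$. Put $a=\frac n2$, so that $4a^2=n^2$ and $2a-1=n-1$. The denominator in the inner fraction is
\[
2an-n-2a^2=n^2-n-\tfrac{n^2}{2}=\tfrac{n(n-2)}{2}.
\]
Hence
\[
\frac{(2a-1)(n-2)^2}{2n(2an-n-2a^2)}=\frac{(n-1)(n-2)^2}{n^2(n-2)}=\frac{(n-1)(n-2)}{n^2}.
\]
Substituting this,
\[
\Omega(\tfrac n2,n)=\frac{n^2\varsigma D^2(n)\big[\frac{(n-1)(n-2)}{n^2}+(n-1)^2\big]}{(n-1)(n-2)^2}
=\frac{\varsigma D^2(n)\big[(n-2)+n^2(n-1)\big]}{(n-2)^2},
\]
where the factor $(n-1)$ has been cancelled. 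This is exactly $\Omega(n)$.

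Therefore Theorem \ref{theorem 1.2} applies and gives that $M^n$ is isometric to $\mathbb R^n$. The only step that needs care is the algebraic simplification of $\Omega$, in particular the cancellation of $n-2$ and $n-1$. Both cancellations are legitimate because $n\geq3$.
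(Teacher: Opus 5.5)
Your proposal is correct and matches the paper, which obtains Theorem~\ref{theorem 1.3} simply by taking $a=\frac{n}{2}$ in Theorem~\ref{theorem 1.2}. Your admissibility check for $a=\frac{n}{2}$ and your simplification of $\Omega(\frac{n}{2},n)$ to $\Omega(n)$ are both right.
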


\vskip5mm
\section {Preliminaries}
\vskip2mm

\noindent

Let $x: M^{n} \rightarrow\mathbb{R}^{n+p}$ denote a smooth complete isometric immersion of an $n$-dimensional manifold into an $(n+p)$-dimensional Euclidean space
$(\mathbb{R}^{n+p}, \bar g)$ with the induced metric $g$.
In this paper, unless otherwise specified, the notations with a bar, for instance $\bar\nabla$ and $\bar\Delta$ denote the quantities corresponding the metric $\bar g$ on $\mathbb{R}^{n+p}$. On the other
hand, the notations like $\nabla$ and $\Delta$ denote the quantities corresponding the intrinsic metric $g$ on $M^{n}$.
Let $A$ denote the second fundamental form of $(M^{n}, g)$, that is, at $p \in M^{n}$, $A(X, Y)=(\bar\nabla_{X} Y )^{N}$, where $X, Y \in T_{p}M^{n}$ and $(\cdot)^{N}$ denotes the projection onto the normal bundle of $M^{n}$. Similarly, $(\cdot)^{T}$ stands for the tangential projection. $\vec{H}$ denote the mean curvature vector of $M^{n}$ given by $\vec{H}=trace (A)$.

We take a local orthonormal frame field $\{e_{i}, e_{\alpha}\}$ along $M^{n}$, together with its dual coframe
$\{\omega_{i}, \omega_{\alpha}\}$, where the $e_{i}$ are tangent to $M^{n}$ and the $e_{\alpha}$ are normal to $M^{n}$.
Denote by $g$ the induced metric on $M^{n}$ and the induced structure equations of $M^{n}$ are

\begin{equation*}
d\omega_{i}=\sum_{j} \omega_{ij}\wedge\omega_{j}, \quad  \omega_{ij}=-\omega_{ji},
\end{equation*}
\begin{equation*}
d\omega_{ij}=\sum_{k} \omega_{ik}\wedge\omega_{kj}+\sum_{\alpha} \omega_{i\alpha}\wedge\omega_{\alpha j},
\end{equation*}
\begin{equation*}
\Omega_{ij}=d\omega_{ij}-\sum_{k} \omega_{ik}\wedge\omega_{kj}=-\frac{1}{2}\sum_{k,l}
R_{ijkl} \omega_{k}\wedge\omega_{l}.
\end{equation*}
Here and in the sequel we use the following conventions on the ranges of indices:
$$1\leq i,j,k\leq\cdots\leq n; \ \ n+1\leq \alpha,\beta,\gamma\leq\cdots\leq n+p.$$

\noindent By restricting  these forms to $M^{n}$, we get
\begin{equation*}
\omega_{\alpha i}=\sum_{j}h^{\alpha}_{ij}\omega_{j}, \ \ h^{\alpha}_{ij}=h^{\alpha}_{ji},
\end{equation*}

\begin{equation*}
A=\sum_{i,j,\alpha}h^{\alpha}_{ij}\omega_{i}\otimes\omega_{j}\otimes\omega_{\alpha}, \ \ \vec{H}=\sum_{\alpha}H^{\alpha}e_{\alpha}=\sum_{\alpha,i}h^{\alpha}_{ii}e_{\alpha},
\end{equation*}

\begin{equation*}
R_{ijkl}=\sum_{\alpha}(h^{\alpha}_{ik}h^{\alpha}_{jl}-h^{\alpha}_{il}h^{\alpha}_{jk}),  \ \ R_{\alpha\beta kl}=\sum_{i}(h^{\alpha}_{ik}h^{\beta}_{il}-h^{\alpha}_{il}h^{\beta}_{ik}).
\end{equation*}

\noindent
Defining the first and second covariant derivatives of $h_{ij}$ by
\begin{equation*}
\sum_{k}h^{\alpha}_{ijk}\omega_k=dh^{\alpha}_{ij}+\sum_{k}h^{\alpha}_{ik}\omega_{kj}
+\sum_{k} h^{\alpha}_{kj}\omega_{ki}+\sum_{\beta} h^{\beta}_{ij}\omega_{\beta\alpha},
\end{equation*}

\begin{equation*}
\sum_{l}h^{\alpha}_{ijkl}\omega_{l}=dh^{\alpha}_{ijk}+\sum_{l}h^{\alpha}_{ljk}\omega_{li}
+\sum_{l}h^{\alpha}_{ilk}\omega_{lj}+\sum_{l} h^{\alpha}_{ijl}\omega_{lk}+\sum_{\beta} h^{\beta}_{ijk}\omega_{\beta\alpha},
\end{equation*}
we obtain
\begin{equation*}
h^{\alpha}_{ijk}=h^{\alpha}_{ikj},
\end{equation*}

\begin{equation*}
h^{\alpha}_{ijkl}-h^{\alpha}_{ijlk}=\sum_m
h^{\alpha}_{mj}R_{mikl}+\sum_m h^{\alpha}_{im}R_{mjkl}+\sum_m h^{\beta}_{ij}R_{\beta\alpha kl}.
\end{equation*}

For self-expanders, an elliptic operator $\mathcal{L}$ is defined by
\begin{equation*}
\mathcal{L}(\cdot)=\Delta (\cdot)+\langle x,\nabla (\cdot)\rangle=\rho^{-1} div(\rho\nabla (\cdot)), \ \ \rho=e^{\frac{|x|^{2}}{2}},
\end{equation*}
where $\Delta$, $div$ and $\nabla$ denote the Laplacian, the divergent operator  and the gradient operator, respectively.

Direct computations using above formulas and the Ricci identities easily give the following Simons type identities.
\begin{equation}\label{2.1-1}
\frac{1}{2}\mathcal{L}|\vec{H}|^{2}=|\nabla^{N}\vec{H}|^{2}-|\vec{H}|^{2}-\sum_{i,j}(\sum_{\alpha}H^{\alpha}h^{\alpha}_{ij})^{2},
\end{equation}

\begin{equation}\label{2.1-2}
\aligned
\frac{1}{2}\mathcal{L}|A|^{2}
=&|\nabla A|^{2}-|A|^{2}-\sum_{\alpha,\beta}(\sum_{i,j}h^{\alpha}_{ij}h^{\beta}_{ij})^{2} \\
&-\sum_{i,j,\alpha,\beta}\Big(\sum_{k}(h^{\alpha}_{ik}h^{\beta}_{kj}-h^{\alpha}_{jk}h^{\beta}_{ki})\Big)^{2}.
\endaligned
\end{equation}
Next, we define the trace-free second fundamental form by $\mathring{A}=A-\frac{1}{n}g\otimes \vec{H}$, then
\begin{equation*}
|\mathring{A}|^{2}=|A|^{2}-\frac{1}{n}|\vec{H}|^{2}, \ \ |\nabla\mathring{A}|^{2}=|\nabla A|^{2}-\frac{1}{n}|\nabla^{N}\vec{H}|^{2}.
\end{equation*}
Subsequently, through \eqref{2.1-1} and \eqref{2.1-2}, we obtain
\begin{equation*}
\aligned
\frac{1}{2}\mathcal{L}|\mathring{A}|^{2}
=&|\nabla\mathring{A}|^{2}-|\mathring{A}|^{2}+\frac{1}{n}\sum_{i,j}(\sum_{\alpha}H^{\alpha}h^{\alpha}_{ij})^{2}-\sum_{\alpha,\beta}(\sum_{i,j}h^{\alpha}_{ij}h^{\beta}_{ij})^{2} \\
&-\sum_{i,j,\alpha,\beta}\Big(\sum_{k}(h^{\alpha}_{ik}h^{\beta}_{kj}-h^{\alpha}_{jk}h^{\beta}_{ki})\Big)^{2}.
\endaligned
\end{equation*}
In particular, for the codimension one case ($p=1$), the above Simons type identities simplifies to the following expression.
\begin{equation*}
\frac{1}{2}\mathcal{L}|A|^{2}=|\nabla A|^{2}-|A|^{2}-|A|^{4}.
\end{equation*}
\begin{equation*}
\aligned
\frac{1}{2}\mathcal{L}|\mathring{A}|^{2}
=&|\nabla\mathring{A}|^{2}-|\mathring{A}|^{2}-|\mathring{A}|^{4}-\frac{1}{n}|\vec{H}|^{2}|\mathring{A}|^{2}.
\endaligned
\end{equation*}
For high codimension ($p\geq2$), there are the following estimates (see \cite{CX},\cite{LL},\cite{WXZ})
\begin{equation*}
\sum_{\alpha,\beta}(\sum_{i,j}h^{\alpha}_{ij}h^{\beta}_{ij})^{2} \\
+\sum_{i,j,\alpha,\beta}\Big(\sum_{k}(h^{\alpha}_{ik}h^{\beta}_{kj}-h^{\alpha}_{jk}h^{\beta}_{ki})\Big)^{2}\leq\frac{3}{2}|A|^{4},
\end{equation*}

\begin{equation*}
\aligned
&\frac{1}{n}\sum_{i,j}(\sum_{\alpha}H^{\alpha}h^{\alpha}_{ij})^{2}-\sum_{\alpha,\beta}(\sum_{i,j}h^{\alpha}_{ij}h^{\beta}_{ij})^{2}
-\sum_{i,j,\alpha,\beta}\Big(\sum_{k}(h^{\alpha}_{ik}h^{\beta}_{kj}-h^{\alpha}_{jk}h^{\beta}_{ki})\Big)^{2}\\
\geq&-2|\mathring{A}|^{4}-\frac{1}{n}|\vec{H}|^{2}|\mathring{A}|^{2}.
\endaligned
\end{equation*}

We summarize the above calculations as the following lemma.
\begin{lemma}\label{lemma 2.1}
The squared norm of the second fundamental form $|A|^{2}$ and the trace-free second fundamental form $|\mathring{A}|^{2}$ of the self-expanders in $\mathbb{R}^{n+p}$ satisfies following Simons type inequalities:
\begin{equation}\label{2.1-3}
\frac{1}{2}\mathcal{L}|A|^{2}\geq|\nabla |A||^{2}-|A|^{2}-\tau|A|^{4},
\end{equation}

\begin{equation}\label{2.1-4}
\aligned
\frac{1}{2}\mathcal{L}|\mathring{A}|^{2}
\geq&|\nabla|\mathring{A}||^{2}-|\mathring{A}|^{2}-\varsigma|\mathring{A}|^{4}-\frac{1}{n}|\vec{H}|^{2}|\mathring{A}|^{2},
\endaligned
\end{equation}
where we have used the Kato-type inequalities $|\nabla A|^{2}\geq|\nabla |A||^{2}$ and $|\nabla\mathring{A}|^{2}\geq|\nabla|\mathring{A}||^{2}$; $\tau=1$ and $\varsigma=1$ if $p=1$, $\tau=\frac{3}{2}$ and $\varsigma=2$ if $p\geq2$. \end{lemma}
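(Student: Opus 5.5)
The plan is to obtain Lemma \ref{lemma 2.1} directly from the Simons type identities \eqref{2.1-2} and the trace-free identity derived just above it. Two further ingredients are needed: the Kato inequality, and the algebraic estimates on the quartic terms quoted from \cite{CX,LL,WXZ}.

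\textbf{The inequality \eqref{2.1-3}.} Start from \eqref{2.1-2}.

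For $p=1$ the quartic terms reduce to exactly $|A|^{4}$, since the commutator term vanishes in codimension one. This gives $\frac12\mathcal{L}|A|^2=|\nabla A|^2-|A|^2-|A|^4$.

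For $p\ge2$, apply the quoted estimate: the sum of the two quartic expressions is at most $\frac32|A|^4$.

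In both cases we then use Kato's inequality $|\nabla A|^2\ge|\nabla|A||^2$. It follows from Cauchy--Schwarz applied to $|A|\,\nabla_k|A|=\sum h^\alpha_{ij}h^\alpha_{ijk}$, which gives $|A|^2|\nabla|A||^2\le|A|^2|\nabla A|^2$ wherever $|A|\neq0$. On the zero set of $|A|$ the inequality is interpreted in the usual weak/distributional sense, or one works with $\sqrt{|A|^2+\epsilon}$ and lets $\epsilon\to0$.

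\textbf{The inequality \eqref{2.1-4}.} Use the identity for $\frac12\mathcal{L}|\mathring A|^2$ obtained by subtracting $\frac1n\cdot$\eqref{2.1-1} from \eqref{2.1-2}. The $-|A|^2+\frac1n|\vec H|^2$ terms combine to $-|\mathring A|^2$, and the gradient terms combine to $|\nabla\mathring A|^2$.

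For $p\ge2$ the remaining cubic/quartic bracket is bounded below by $-2|\mathring A|^4-\frac1n|\vec H|^2|\mathring A|^2$, by the second quoted estimate.

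For $p=1$ compute directly. Here $\frac1n\sum_{ij}(Hh_{ij})^2=\frac1n H^2|A|^2$, and the quartic term is $|A|^4$. Substituting $|A|^2=|\mathring A|^2+\frac1nH^2$ gives
$$\frac1nH^2|A|^2-|A|^4=-|A|^2|\mathring A|^2=-|\mathring A|^4-\frac1nH^2|\mathring A|^2,$$
which is the stated identity with $\varsigma=1$.

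Finally apply Kato to $\mathring A$, which goes through exactly as for $A$.

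\textbf{Main obstacle.} The only nontrivial step is the higher-codimension algebraic estimates, which are the Li--Li type matrix inequalities. Since the paper cites them, I take them as given. If I had to verify them, I would pass to the trace-free matrices $\mathring h^\alpha$ and diagonalize the normal bundle so that $\vec H$ is parallel to $e_{n+1}$. I would then apply the Li--Li inequality $\sum_{\alpha,\beta}|\mathring h^\alpha\mathring h^\beta-\mathring h^\beta\mathring h^\alpha|^2+\sum_{\alpha,\beta}(\mathrm{tr}\,\mathring h^\alpha\mathring h^\beta)^2\le\frac32|\mathring A|^4$. The cross terms involving $H$ would be controlled by the bound $|\mathrm{tr}(\mathring h^{n+1})^3|\le\frac{n-2}{\sqrt{n(n-1)}}|\mathring h^{n+1}|^3$ together with Young's inequality, absorbing everything into $-2|\mathring A|^4-\frac1n|\vec H|^2|\mathring A|^2$.
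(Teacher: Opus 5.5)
Your proposal is correct and follows the paper's own route: reduce \eqref{2.1-2} and the trace-free identity, using the exact codimension-one computation for $p=1$ and the cited quartic estimates for $p\geq2$, then apply Kato's inequality. One minor remark on your verification sketch: the self-expander equation has already cancelled all cubic terms, so expanding $h^\alpha=\mathring h^\alpha+\frac{H^\alpha}{n}I$ shows the bracket equals exactly $-\frac1n|\sum_\alpha H^\alpha\mathring h^\alpha|^2-\sum_{\alpha,\beta}(\mathrm{tr}\,\mathring h^\alpha\mathring h^\beta)^2-\sum_{\alpha,\beta}|\mathring h^\alpha\mathring h^\beta-\mathring h^\beta\mathring h^\alpha|^2$; Cauchy--Schwarz plus Li--Li applied to the $\mathring h^\alpha$ then give the bound (even with $\frac32$ in place of $2$), and no estimate on $\mathrm{tr}(\mathring h^{n+1})^3$ is needed.
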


 \vskip10mm
\section{Proof of the rigidity theorem for $A$}

\vskip2mm
\noindent

There is the Sobolev inequality (see \cite{MS}) as follows

\begin{equation}\label{3.1-1}
\kappa^{-1}\Big(\int_{M^{n}}f^{\frac{2n}{n-2}}d\mu\Big)^{\frac{n-2}{n}}\leq\int_{M^{n}}|\nabla f|^{2}d\mu+\frac{1}{2}\int_{M^{n}}|\vec{H}|^{2}|f|^{2}d\mu, \ \ \forall f\in \mathrm{C}^{\infty}_{c}(M^{n}),
\end{equation}
where $\kappa>0$ is a constant.

\noindent
Let $\eta$ be a smooth function with compact support in $M^{n}$, and define $\rho=e^{\frac{|x|^{2}}{2}}$, we obtain
\begin{equation}\label{3.1-2}
\nabla_{e_{i}} \rho=\rho \langle x, e_{i}\rangle, \ \ \nabla_{e_{i}} \rho^{\frac{1}{2}}=\frac{1}{2}\rho^{\frac{1}{2}}\langle x, e_{i}\rangle, \ \
\Delta\rho=\rho (n+|x|^{2}).
\end{equation}

{\it Proof of Theorem \ref{theorem 1.1}}.
Multiplying $|A|^{n-2}\eta^{2}\rho$ on both sides of the \eqref{2.1-3} and integrating by parts yield
\begin{equation*}
\aligned
0\geq&-\frac{1}{2}\int_{M^{n}}|A|^{n-2}\eta^{2}\rho\mathcal{L}|A|^{2}+\int_{M^{n}}|\nabla |A||^{2}|A|^{n-2}\eta^{2}\rho-
\int_{M^{n}}|A|^{n}\eta^{2}\rho \\
&-\tau\int_{M^{n}}|A|^{n+2}\eta^{2}\rho \\
=&\int_{M^{n}}\langle \nabla |A|, \nabla(|A|^{n-2}\eta^{2})\rangle|A|\rho+\int_{M^{n}}|\nabla |A||^{2}|A|^{n-2}\eta^{2}\rho-
\int_{M^{n}}|A|^{n}\eta^{2}\rho \\
&-\tau\int_{M^{n}}|A|^{n+2}\eta^{2}\rho \\
=&(n-1)\int_{M^{n}}|\nabla |A||^{2}|A|^{n-2}\eta^{2}\rho+2\int_{M^{n}}\langle \nabla |A|, \nabla\eta)\rangle|A|^{n-1}\eta\rho-
\int_{M^{n}}|A|^{n}\eta^{2}\rho \\
&-\tau\int_{M^{n}}|A|^{n+2}\eta^{2}\rho.
\endaligned
\end{equation*}
By the Cauchy inequality, for any $\varepsilon>0$, we have
\begin{equation}\label{3.1-3}
\aligned
(n-1-\varepsilon)\int_{M^{n}}|\nabla |A||^{2}|A|^{n-2}\eta^{2}\rho\leq&
\int_{M^{n}}|A|^{n}\eta^{2}\rho+\tau\int_{M^{n}}|A|^{n+2}\eta^{2}\rho \\
&+\frac{1}{\varepsilon}\int_{M^{n}}|\nabla\eta|^{2}|A|^{n}\rho.
\endaligned
\end{equation}
Setting $f=|A|^{\frac{n}{2}}\eta\rho^{\frac{1}{2}}$. Integrating by parts and using \eqref{3.1-2}, one obtain

\begin{equation}\label{3.1-4}
\aligned
\int_{M^{n}}|\nabla f|^{2}
=&\int_{M^{n}}|\nabla (|A|^{\frac{n}{2}}\eta)|^{2}\rho+\int_{M^{n}}|A|^{n}\eta^{2}|\nabla\rho^{\frac{1}{2}}|^{2}+\frac{1}{2}\int_{M^{n}}\langle \nabla \rho, \nabla(|A|^{n}\eta^{2})\rangle \\
=&\int_{M^{n}}|\nabla (|A|^{\frac{n}{2}}\eta)|^{2}\rho+\frac{1}{4}\int_{M^{n}}|A|^{n}\eta^{2}\rho|x^{T}|^{2}-\frac{1}{2}\int_{M^{n}}|A|^{n}\eta^{2}\Delta\rho \\
=&\int_{M^{n}}|\nabla (|A|^{\frac{n}{2}}\eta)|^{2}\rho-\frac{1}{4}\int_{M^{n}}|A|^{n}\eta^{2}\rho|x^{T}|^{2}-\frac{1}{2}\int_{M^{n}}|\vec{H}|^{2}|A|^{n}\eta^{2}\rho \\
&-\frac{n}{2}\int_{M^{n}}|A|^{n}\eta^{2}\rho.
\endaligned
\end{equation}
Substituting \eqref{3.1-3} and \eqref{3.1-4} into \eqref{3.1-1}, and using the Cauchy inequality, for any $\delta>0$, we know
\begin{equation}\label{3.1-5}
\aligned
&\kappa^{-1}\Big(\int_{M^{n}}f^{\frac{2n}{n-2}}d\mu\Big)^{\frac{n-2}{n}}\\
\leq&\int_{M^{n}}|\nabla (|A|^{\frac{n}{2}}\eta)|^{2}\rho-\frac{1}{4}\int_{M^{n}}|A|^{n}\eta^{2}\rho|x^{T}|^{2}-\frac{n}{2}\int_{M^{n}}|A|^{n}\eta^{2}\rho \\
\leq&\frac{n^{2}}{4}(1+\delta)\int_{M^{n}}|\nabla |A||^{2}|A|^{n-2}\eta^{2}\rho+(1+\frac{1}{\delta})\int_{M^{n}}|\nabla \eta|^{2}|A|^{n}\rho-\frac{n}{2}\int_{M^{n}}|A|^{n}\eta^{2}\rho \\
\leq&\frac{n^{2}(1+\delta)}{4(n-1-\varepsilon)}\Big(\int_{M^{n}}|A|^{n}\eta^{2}\rho+\tau\int_{M^{n}}|A|^{n+2}\eta^{2}\rho+\frac{1}{\varepsilon}\int_{M^{n}}|\nabla\eta|^{2}|A|^{n}\rho\Big)\\
&+(1+\frac{1}{\delta})\int_{M^{n}}|\nabla \eta|^{2}|A|^{n}\rho
-\frac{n}{2}\int_{M^{n}}|A|^{n}\eta^{2}\rho.
\endaligned
\end{equation}
Let $\delta=\delta(\varepsilon)=\frac{2(n-1-2\varepsilon)}{n}-1$, where $\varepsilon$ is a positive constant such that $\delta>0$. Then by the H\"{o}lder inequality,
\eqref{3.1-5} yields
\begin{equation}\label{3.1-6}
\aligned
&\kappa^{-1}\Big(\int_{M^{n}}f^{\frac{2n}{n-2}}d\mu\Big)^{\frac{n-2}{n}}\\
\leq&\frac{\tau n}{2}\cdot\frac{n-1-2\varepsilon}{n-1-\varepsilon}\int_{M^{n}}|A|^{n+2}\eta^{2}\rho+\Big(\frac{n}{2\varepsilon}\cdot\frac{n-1-2\varepsilon}{n-1-\varepsilon}
+1+\frac{1}{\delta}\Big) \int_{M^{n}}|\nabla\eta|^{2}|A|^{n}\rho \\
\leq&\frac{\tau n}{2}\cdot\frac{n-1-2\varepsilon}{n-1-\varepsilon}\Big(\int_{M^{n}}(|A|^{2})^{\frac{n}{2}}\Big)^{\frac{2}{n}}\cdot
\Big(\int_{M^{n}}(|A|^{n}\eta^{2}\rho)^{\frac{n}{n-2}}\Big)^{\frac{n-2}{n}}\\
&+\mathrm{C}(n, \varepsilon)\int_{M^{n}}|\nabla\eta|^{2}|A|^{n}\rho,
\endaligned
\end{equation}
where $\mathrm{C}(n, \varepsilon)$ a positive constant.
If we assume that
$$\Big(\int_{M^{n}}|A|^{n}\Big)^{\frac{1}{n}}<\sqrt{\frac{2}{\tau n\kappa}},$$
then there is $0<\varepsilon_{0}<1$  in \eqref{3.1-6} such that

\begin{equation*}
\aligned
\kappa^{-1}\Big(\int_{M^{n}}f^{\frac{2n}{n-2}}d\mu\Big)^{\frac{n-2}{n}}
\leq&\frac{\tau n}{2}\cdot\frac{n-1-2\varepsilon}{n-1-\varepsilon}\cdot\frac{2(1-\varepsilon_{0})}{\tau n\kappa}
\Big(\int_{M^{n}}(|A|^{n}\eta^{2}\rho)^{\frac{n}{n-2}}\Big)^{\frac{n-2}{n}}\\
&+\Big(\frac{n}{2\varepsilon}\cdot\frac{n-1-2\varepsilon}{n-1-\varepsilon}
+1+\frac{1}{\delta}\Big) \int_{M^{n}}|\nabla\eta|^{2}|A|^{n}\rho.
\endaligned
\end{equation*}
It can be seen that

\begin{equation*}
\aligned
\frac{(n-1-2\varepsilon)\varepsilon_{0}+\varepsilon}{\kappa(n-1-\varepsilon)}
\Big(\int_{M^{n}}f^{\frac{2n}{n-2}}d\mu\Big)^{\frac{n-2}{n}}
\leq\mathrm{C} (n, \varepsilon) \int_{M^{n}}|\nabla\eta|^{2}|A|^{n}\rho.
\endaligned
\end{equation*}
 The desired conclusion $|A|\equiv0$ then follows upon choosing an appropriate cut-off function $\eta$ since $\int_{M}|A|^{n}e^{\frac{|x|^{2}}{2}}d\mu<\infty$.

 \vskip10mm
\section{Proof of the rigidity theorem for $\mathring{A}$}

\vskip2mm
\noindent

The proof of the rigidity theorem in this section requires the following Sobolev inequality for submanifolds in the Euclidean
space (see \cite{HS},\cite{XG}).

\begin{proposition}\label{proposition 4.1}
Let $M^{n}$ ($n\geq3$) be an $n$-dimensional complete submanifold in the Euclidean space $\mathbb{R}^{n+p}$.
Let $f$ be a nonnegative $\mathrm{C}^{1}$
function with compact support. Then for all $s\in \mathbb{R}^{+}$,  we have

\begin{equation*}
\aligned
\|f\|^{2}_{2n/(n-2)}\leq D^{2}(n)\Big[\frac{4(n-1)^{2}(1+s)}{(n-2)^{2}}\|\nabla f\|^{2}_{2}+\big(1+\frac{1}{s}\big)\frac{1}{n^{2}}\||\vec{H}|f\|^{2}_{2}\Big],
\endaligned
\end{equation*}
where $D(n)=2^{n}(1+n)^{(n+1)/n}(n-1)^{-1}\sigma_{n}^{-\frac{1}{n}}$ and $\sigma_{n}$ denotes the volume of the unit ball in $\mathbb{R}^{n}$.
\end{proposition}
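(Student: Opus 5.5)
The plan is to derive Proposition \ref{proposition 4.1} from the classical Michael--Simon / Hoffman--Spruck $L^{1}$ Sobolev inequality for submanifolds of Euclidean space, which I take as known (it is the content of the cited references \cite{HS}, \cite{XG}). In the normalization relevant here it reads: for any nonnegative $\mathrm{C}^{1}$ function $h$ with compact support on $M^{n}\subset\mathbb{R}^{n+p}$,
\begin{equation*}
\Big(\int_{M^{n}}h^{\frac{n}{n-1}}\Big)^{\frac{n-1}{n}}\leq D(n)\int_{M^{n}}\Big(|\nabla h|+\frac{1}{n}|\vec{H}|h\Big),
\end{equation*}
with $D(n)$ the explicit constant in the statement. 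Here the ambient manifold is flat, so no injectivity radius or volume restriction on the support is needed. The rest is the standard passage from the $L^{1}$ form to the $L^{2}$ form.

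\textbf{Step 1 (substitution).} Apply the inequality to $h=f^{\gamma}$ with $\gamma=\frac{2(n-1)}{n-2}>1$. Then $h$ is still $\mathrm{C}^{1}$, nonnegative and compactly supported, and $\nabla h=\gamma f^{\gamma-1}\nabla f$ with $\gamma-1=\frac{n}{n-2}$. Since $h^{\frac{n}{n-1}}=f^{\frac{2n}{n-2}}$, the left side becomes $\big(\int f^{q}\big)^{\frac{n-1}{n}}$, where $q=\frac{2n}{n-2}$. The right side is
\begin{equation*}
D(n)\Big[\gamma\int f^{\frac{n}{n-2}}|\nabla f|+\frac{1}{n}\int f^{\frac{n}{n-2}}\,|\vec{H}|f\Big].
\end{equation*}

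\textbf{Step 2 (H\"older).} Apply the Cauchy--Schwarz inequality to each integral. Since $\big(f^{\frac{n}{n-2}}\big)^{2}=f^{q}$, we get $\int f^{\frac{n}{n-2}}g\leq(\int f^{q})^{1/2}\|g\|_{2}$. Dividing by $(\int f^{q})^{1/2}$ (the case $f\equiv0$ is trivial) leaves $(\int f^{q})^{\frac{n-1}{n}-\frac12}=(\int f^{q})^{\frac{n-2}{2n}}=\|f\|_{q}$ on the left. Hence
\begin{equation*}
\|f\|_{2n/(n-2)}\leq D(n)\Big[\frac{2(n-1)}{n-2}\|\nabla f\|_{2}+\frac{1}{n}\||\vec{H}|f\|_{2}\Big].
\end{equation*}

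\textbf{Step 3 (squaring).} Square both sides and use the elementary bound $(\alpha+\beta)^{2}\leq(1+s)\alpha^{2}+(1+\frac1s)\beta^{2}$ for $s>0$. This yields exactly the stated inequality, with coefficients $\frac{4(n-1)^{2}(1+s)}{(n-2)^{2}}$ and $(1+\frac1s)\frac{1}{n^{2}}$.

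The only genuine obstacle is the $L^{1}$ inequality itself, together with the bookkeeping of its constant: one must check that the cited form carries the factor $\frac{1}{n}|\vec{H}|$, i.e.\ it uses the normalized mean curvature. The exponent algebra above is routine once that is fixed.
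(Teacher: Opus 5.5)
The paper gives no argument of its own here---it simply quotes the inequality from Hoffman--Spruck and Xu--Gu. Your three steps (substituting $h=f^{2(n-1)/(n-2)}$ into the $L^{1}$ Hoffman--Spruck inequality, applying Cauchy--Schwarz, then splitting with $(\alpha+\beta)^{2}\le(1+s)\alpha^{2}+(1+\frac{1}{s})\beta^{2}$) are exactly the standard derivation behind that citation, and all the exponent bookkeeping is correct.

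The only real caveat is the one you flagged yourself. The first-variation argument underlying Michael--Simon and Hoffman--Spruck naturally produces the unnormalized $\vec{H}$ with the same weight as $|\nabla h|$. So the factor $\frac{1}{n^{2}}$ is justified only if the source genuinely pairs the constant $D(n)$ with $\frac{1}{n}|\vec{H}|$. Otherwise the same steps give $(1+\frac{1}{s})\||\vec{H}|f\|_{2}^{2}$ in place of $(1+\frac{1}{s})\frac{1}{n^{2}}\||\vec{H}|f\|_{2}^{2}$, and the paper offers nothing beyond the citation to settle this.
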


{\it Proof of Theorem \ref{theorem 1.2}}. Given a smooth compactly supported function $\eta$,
multiplying both sides of \eqref{2.1-4} by the factor $|\mathring{A}|^{2a-2}\eta^{2}\rho$ and following the same procedure as before, we obtain, for every $0<\epsilon<2a-1$,
\begin{equation*}
\aligned
&(2a-1-\epsilon)\int_{M^{n}}|\nabla |\mathring{A}||^{2}|\mathring{A}|^{2a-2}\eta^{2}\rho \\
\leq&
\int_{M^{n}}|\mathring{A}|^{2a}\eta^{2}\rho+\varsigma\int_{M^{n}}|\mathring{A}|^{2a+2}\eta^{2}\rho
+\frac{1}{n}\int_{M^{n}}|\vec{H}|^{2}|\mathring{A}|^{2a}\eta^{2}\rho+\frac{1}{\epsilon}\int_{M^{n}}|\nabla\eta|^{2}|\mathring{A}|^{2a}\rho.
\endaligned
\end{equation*}
Setting $f=|\mathring{A}|^{a}\eta\rho^{\frac{1}{2}}$. Using the same argument as in the proof of Theorem \ref{theorem 1.1}, we get
\begin{equation}\label{4.1-1}
\aligned
\int_{M^{n}}|\nabla f|^{2}
=&\int_{M^{n}}|\nabla (|\mathring{A}|^{a}\eta)|^{2}\rho+\int_{M^{n}}|\mathring{A}|^{2a}\eta^{2}|\nabla\rho^{\frac{1}{2}}|^{2}+\frac{1}{2}\int_{M^{n}}\langle \nabla \rho, \nabla(|\mathring{A}|^{2a}\eta^{2})\rangle \\
=&\int_{M^{n}}|\nabla (|\mathring{A}|^{a}\eta)|^{2}\rho+\frac{1}{4}\int_{M^{n}}|\mathring{A}|^{2a}\eta^{2}\rho|x^{T}|^{2}-\frac{1}{2}\int_{M^{n}}|\mathring{A}|^{2a}\eta^{2}\Delta\rho \\
=&\int_{M^{n}}|\nabla (|\mathring{A}|^{a}\eta)|^{2}\rho-\frac{1}{4}\int_{M^{n}}|\mathring{A}|^{2a}\eta^{2}\rho|x^{T}|^{2}-\frac{1}{2}\int_{M^{n}}|\vec{H}|^{2}|\mathring{A}|^{2a}\eta^{2}\rho \\
&-\frac{n}{2}\int_{M^{n}}|\mathring{A}|^{2a}\eta^{2}\rho.
\endaligned
\end{equation}
Combining the Cauchy inequality, \eqref{4.1-1} with the Sobolev inequality in Proposition \ref{proposition 4.1}, we obtain for any $\delta>0$

\begin{equation*}
\aligned
&\Big(\int_{M^{n}}f^{\frac{2n}{n-2}}\Big)^{\frac{n-2}{n}}\\
\leq&\frac{4 D^{2}(n)(n-1)^{2}(1+s)}{(n-2)^{2}}\int_{M^{n}}|\nabla f|^{2}+D^{2}(n)\big(1+\frac{1}{s}\big)\frac{1}{n^{2}}\int_{M^{n}}|\vec{H}|^{2}|f|^{2} \\
\leq&\frac{4 D^{2}(n)(n-1)^{2}(1+s)}{(n-2)^{2}}\Big(\int_{M^{n}}|\nabla (|\mathring{A}|^{a}\eta)|^{2}\rho-\frac{1}{2}\int_{M^{n}}|\vec{H}|^{2}|\mathring{A}|^{2a}\eta^{2}\rho-\frac{n}{2}\int_{M^{n}}|\mathring{A}|^{2a}\eta^{2}\rho\Big)\\
&+D^{2}(n)\big(1+\frac{1}{s}\big)\frac{1}{n^{2}}\int_{M^{n}}|\vec{H}|^{2}|\mathring{A}|^{2a}\eta^{2}\rho \\
\leq&\frac{4 D^{2}(n)(n-1)^{2}(1+s)}{(n-2)^{2}}\Big(a^{2}(1+\delta)\int_{M^{n}}|\nabla |\mathring{A}||^{2}|\mathring{A}|^{2a-2}\eta^{2}\rho+(1+\frac{1}{\delta})\int_{M^{n}}|\nabla \eta|^{2}|\mathring{A}|^{2a}\rho\\
&-\frac{1}{2}\int_{M^{n}}|\vec{H}|^{2}|\mathring{A}|^{2a}\eta^{2}\rho-\frac{n}{2}\int_{M^{n}}|\mathring{A}|^{2a}\eta^{2}\rho\Big)
+D^{2}(n)\big(1+\frac{1}{s}\big)\frac{1}{n^{2}}\int_{M^{n}}|\vec{H}|^{2}|\mathring{A}|^{2a}\eta^{2}\rho \\
\leq&\frac{4D^{2}(n)(n-1)^{2}(1+s)}{(n-2)^{2}}\Bigg(\frac{a^{2}(1+\delta)}{2a-1-\epsilon}\Big(\int_{M^{n}}|\mathring{A}|^{2a}\eta^{2}\rho+\varsigma\int_{M^{n}}|\mathring{A}|^{2a+2}\eta^{2}\rho\\
&+\frac{1}{n}\int_{M^{n}}|\vec{H}|^{2}|\mathring{A}|^{2a}\eta^{2}\rho+\frac{1}{\epsilon}\int_{M^{n}}|\nabla\eta|^{2}|\mathring{A}|^{2a}\rho\Big)+(1+\frac{1}{\delta})\int_{M^{n}}|\nabla \eta|^{2}|\mathring{A}|^{2a}\rho\\
&-\frac{1}{2}\int_{M^{n}}|\vec{H}|^{2}|\mathring{A}|^{2a}\eta^{2}\rho-\frac{n}{2}\int_{M^{n}}|\mathring{A}|^{2a}\eta^{2}\rho\Bigg)
+D^{2}(n)\big(1+\frac{1}{s}\big)\frac{1}{n^{2}}\int_{M^{n}}|\vec{H}|^{2}|\mathring{A}|^{2a}\eta^{2}\rho.
\endaligned
\end{equation*}
Choosing
\begin{equation}\label{4.1-2}
\aligned
&\kappa=\frac{4D^{2}(n)(n-1)^{2}}{(n-2)^{2}},\\
&\delta=\delta(\epsilon, s, a, n)=\frac{(2a-1-\epsilon)[2n^{2}(n-1)^{2}s-(n-2)^{2}]}{4a^{2}n(n-1)^{2}s}-1,
\endaligned
\end{equation}
where
$s$ and $\epsilon$
are positive constants satisfying
$$s>\frac{(2a-1-\epsilon)(n-2)^{2}}{2n(2an-n-2a^{2}-n\epsilon)(n-1)^{2}}, \ \ \epsilon\in(0, 2a-1-\frac{2a^{2}}{n})$$ such that $\delta>0$ for $1\leq a<\frac{n+\sqrt{n^{2}-2n}}{2}$.
Thus
\begin{equation}\label{4.1-3}
\aligned
&\kappa^{-1}\Big(\int_{M^{n}}f^{\frac{2n}{n-2}}\Big)^{\frac{n-2}{n}}\\
\leq&(1+s)\Bigg(\frac{a^{2}(1+\delta)}{2a-1-\epsilon}\Big(\int_{M^{n}}|\mathring{A}|^{2a}\eta^{2}\rho+\varsigma\int_{M^{n}}|\mathring{A}|^{2a+2}\eta^{2}\rho+\frac{1}{\epsilon}\int_{M^{n}}|\nabla\eta|^{2}|\mathring{A}|^{2a}\rho\Big)\\
&+(1+\frac{1}{\delta})\int_{M^{n}}|\nabla \eta|^{2}|\mathring{A}|^{2a}\rho-\frac{n}{2}\int_{M^{n}}|\mathring{A}|^{2a}\eta^{2}\rho\Bigg) \\
\leq&(1+s)\Bigg(\frac{\varsigma a^{2}(1+\delta)}{2a-1-\epsilon}\int_{M^{n}}|\mathring{A}|^{2a+2}\eta^{2}\rho
+\Big(\frac{a^{2}(1+\delta)}{\epsilon(2a-1-\epsilon)}+1+\frac{1}{\delta}\Big)\int_{M^{n}}|\nabla \eta|^{2}|\mathring{A}|^{2a}\rho\Bigg) \\
\endaligned
\end{equation}
By the H\"{o}lder inequality, it follows from \eqref{4.1-2} and \eqref{4.1-3} that
\begin{equation}\label{4.1-4}
\aligned
&\kappa^{-1}\Big(\int_{M^{n}}f^{\frac{2n}{n-2}}\Big)^{\frac{n-2}{n}} \\
\leq&\frac{\varsigma(1+s)[2n^{2}(n-1)^{2}s-(n-2)^{2}] }{4n(n-1)^{2}s}\Big(\int_{M^{n}}|\mathring{A}|^{n}\Big)^{\frac{2}{n}}
\Big(\int_{M^{n}}(|\mathring{A}|^{2a}\eta^{2}\rho)^{\frac{n}{n-2}}\Big)^{\frac{n-2}{n}} \\
&+\mathrm{C}(\epsilon, s, a, n)\int_{M^{n}}|\nabla \eta|^{2}|\mathring{A}|^{2a}\rho,
\endaligned
\end{equation}
where $\mathrm{C}(\epsilon, s, a, n)$ a positive constant.

\noindent
Since
$s>\frac{(2a-1-\epsilon)(n-2)^{2}}{2n(2an-n-2a^{2}-n\epsilon)(n-1)^{2}}$ and $\epsilon\in(0, 2a-1-\frac{2a^{2}}{n})$, for simplicity, we choose
\begin{equation*}
s(\epsilon, a)=\frac{(2a-1)(n-2)^{2}}{2n(2an-n-2a^{2}-n\epsilon)(n-1)^{2}}.
\end{equation*}
Let
\begin{equation*}
\Omega(s, n)=\frac{\kappa\varsigma(1+s)[2n^{2}(n-1)^{2}s-(n-2)^{2}] }{4n(n-1)^{2}s}, \ \ \Omega(a, n)=\inf_{\epsilon\in(0, 2a-1-\frac{2a^{2}}{n})}\Omega(s(\epsilon, a), n),
\end{equation*}
we know
\begin{equation*}
\aligned
\Omega(a, n)&=\Omega(s(0, a), n)\\
&=\frac{\varsigma D^{2}(n)(1+s(0, a))[2n^{2}(n-1)^{2}s(0, a)-(n-2)^{2}]}{n(n-2)^{2}s(0, a)} \\
&=\frac{4a^{2}\varsigma D^{2}(n)[\frac{(2a-1)(n-2)^{2}}{2n(2an-n-2a^{2})}+(n-1)^{2}]}{(2a-1)(n-2)^{2}}.
\endaligned
\end{equation*}
Under the assumption that
$$\Big(\int_{M^{n}}|\mathring{A}|^{n}\Big)^{\frac{1}{n}}<\sqrt{\frac{(2a-1)(n-2)^{2}}{4a^{2}\varsigma D^{2}(n)[\frac{(2a-1)(n-2)^{2}}{2n(2an-n-2a^{2})}+(n-1)^{2}]}},$$
there exists a positive constant $\bar{\Omega}>\Omega(a, n)$ such that
$$\Big(\int_{M^{n}}|\mathring{A}|^{n}\Big)^{\frac{1}{n}}<\bar{\Omega}^{-\frac{1}{2}}.$$
Consequently, one can select $\epsilon_{0}\in(0, 2a-1-\frac{2a^{2}}{n})$ satisfying

$$\Omega(a, n)<\Omega(s(\epsilon_{0}, a), n)<\bar{\Omega}.$$
It then follows that there exists $0<\varepsilon_{0}<1$  in \eqref{4.1-4} such that
\begin{equation*}
\aligned
\kappa^{-1}\Big(\int_{M^{n}}f^{\frac{2n}{n-2}}\Big)^{\frac{n-2}{n}}
\leq&\kappa^{-1}\cdot\Omega(s(\epsilon_{0}, a), n)\cdot\bar{\Omega}^{-1}\Big(\int_{M^{n}}(|\mathring{A}|^{2a}\eta^{2}\rho)^{\frac{n}{n-2}}\Big)^{\frac{n-2}{n}} \\
&+C(\epsilon_{0}, s, a, n)\int_{M^{n}}|\nabla \eta|^{2}|\mathring{A}|^{2a}\rho \\
\leq&\kappa^{-1}(1-\varepsilon_{0})\Big(\int_{M^{n}}(|\mathring{A}|^{2a}\eta^{2}\rho)^{\frac{n}{n-2}}\Big)^{\frac{n-2}{n}} \\
&+C(\epsilon_{0}, s, a, n)\int_{M^{n}}|\nabla \eta|^{2}|\mathring{A}|^{2a}\rho.
\endaligned
\end{equation*}
Namely,
\begin{equation*}
\aligned
\kappa^{-1}\varepsilon_{0}\Big(\int_{M^{n}}(|\mathring{A}|^{2a}\eta^{2}\rho)^{\frac{n}{n-2}}\Big)^{\frac{n-2}{n}}
\leq C(\epsilon_{0}, s, a, n)\int_{M^{n}}|\nabla \eta|^{2}|\mathring{A}|^{2a}\rho \\
\endaligned
\end{equation*}
When $\int_{M}|\mathring{A}|^{2a}e^{\frac{|x|^{2}}{2}}d\mu<\infty$, by an appropriate choice of the cut-off function $\eta$, we infer
$$|\mathring{A}|\equiv0, \ \ A=\frac{1}{n}g\otimes \vec{H}.$$ From $A=\frac{1}{n}g\otimes \vec{H}$ and the Codazzi equation,
we know
$$g_{ij}\nabla^{N}_{k}\vec{H}=g_{kj}\nabla^{N}_{i}\vec{H}.$$
Contracting over $i,j$, we obtain
$$(n-1)\nabla^{N}_{k}\vec{H}=0,  \ \ 1\leq k\leq n.$$
So that $|\vec{H}|^{2}$ is constant.  Substituting into \eqref{2.1-1}, we obtain
$$|\vec{H}|^{2}=0, \ \ |A|^{2}=0,$$
which implies that $M^{n}$ is isometric to $\mathbb R^{n}$.

\noindent{\bf Acknowledgements}
The first author was partially supported by NSFC grant No.12401060, Natural Science Foundation of Henan Province grant No.262300421232.
The second author was partly supported by NSFC grant No.12171164.

\noindent{\bf Declarations}

\noindent{\bf Conflict of interest} There are no conflicts of interest with third parties.

\noindent{\bf Data availability} Data sharing not applicable to this article as no datasets were generated or analysed during the current study.

\end{document}